\documentclass{elsarticle} 

\usepackage{hyperref} 

\usepackage{amsmath,amssymb}
\usepackage{amsthm}

\usepackage{xcolor}

\newtheorem{theorem}{Theorem}

\newtheorem{lemma}{Lemma}
\newtheorem{definition}{Definition}
\newtheorem{remark}{Remark}
\newtheorem{example}{Example}
\newtheorem{proposition}{Proposition}

\newcommand{\cM}{{\cal{M}}}
\newcommand{\re}{{\mathbb R}}
\newcommand{\ignore}[1]{{}}

\newcommand{\n}{{\mathbb N}}

\bibliographystyle{elsarticle-num}

\begin{document}

\begin{frontmatter}

\title{{Lower bounds and dense discontinuity phenomena for the stabilizability radius of linear switched systems}}

\author[address1]{Carl P. Dettmann}
\ead{carl.dettmann@bristol.ac.uk}

\author[address2]{R. M. Jungers}
\ead{raphael.jungers@uclouvain.be}

\author[address3]{P. Mason}
\ead{paolo.mason@centralesupelec.fr}

\address[address1]{School of Mathematics, University of Bristol, Fry Building, Woodland Road, Bristol BS81UG, UK.}
\address[address2]{ICTEAM Institute, Universit\'e catholique de Louvain, 4 avenue Georges Lemaitre, B-1348 Louvain-la-Neuve, Belgium. R. J. is an F.R.S.-FNRS honorary research associate.}
\address[address3]{Universit\'e Paris-Saclay, CNRS, CentraleSup\'elec, Laboratoire des signaux et syst\`emes, 91190, Gif-sur-Yvette, France.}

\begin{abstract}
We investigate the stabilizability of discrete-time linear switched systems, when the sole control action of the controller is the switching signal, and when the controller has access to the state of the system in real time. 
{Despite their apparent simplicity, determining if such systems are stabilizable appears to be a very challenging problem, and  basic examples have been known for long, for which the stabilizability question is open.}

We provide {new results} allowing us to bound the so-called stabilizability radius, which characterizes the stabilizability property of discrete-time linear switched systems.  
{These results allow us to compute significantly improved explicit lower bounds on
the stabilizability radius for the above-mentioned examples.}  As a by-product, we exhibit a discontinuity property for this problem, which brings theoretical understanding of its complexity.
\end{abstract}

\begin{keyword}
Switched systems\sep stabilizability\sep joint spectral characteristics
\MSC[2010]  93C55 \sep 93C30\sep 93D15
\end{keyword}

\end{frontmatter}


\section{Introduction}

\emph{Joint spectral characteristics} are numerical quantities that describe the asymptotic behaviour of matrix semigroups.  They have found many applications, in particular in Systems and Control.  

Consider a finite set of $m$ matrices $\cM \in \re^{n\times n},$ and the corresponding \emph{linear discrete time switching systems,} which is a system whose behaviour follows the following law:
 \begin{equation} \label{ss} x(k+1) = A_{\sigma_k}x(k)\quad \sigma_k\in \{1,\dots, m\}. \end{equation}
 These systems are not uniquely defined, but any `switching signal' $\sigma$ implies a well defined law of evolution for the system.  The joint spectral characteristics have emerged quite independently during the second half of the 20th century, with the goal of characterizing the rate of growth of System \eqref{ss} for some possible switching signal.  These quantities have attracted a lot of attention, not only because of their applications, but probably also because, despite the apparent simplicity of their definition, they turn out to be extremely hard to compute.  See for instance \cite{BlTi3,BlTi2} for typical complexity results on the topic.

The first quantity, and perhaps the most well-known, was introduced in the context of robust control, and represents the worst case rate of growth of a switching system:
$$ \rho_\infty(\cM)=\lim_{k\rightarrow \infty}\max_{A\in \cM^k}\{||A||^{1/k}\}.$$  It is commonly referred to as the \emph{Joint Spectral Radius} (JSR in short) of the set $\cM.$ It has been introduced by Rota and Strang \cite{rs60}.  See \cite{jungers_lncis} for a monograph on the topic.
Since then, several other quantities were proposed, in order to describe other possible rates of growth of the system. Let us mention the \emph{p-radius,} \cite{Jia,Wang} with motivations in mathematical analysis; the \emph{Lyapunov exponent} (see \cite{Protasov-jungers-lyap-laa, pollicott}) with motivations in randomly switching systems; or the \emph{Joint Spectral Subradius}, which represents the minimal 
rate of growth for {the evolution operator of} System \eqref{ss} (see \cite{gu1,GP11}).  

{In this paper, we are concerned with the \emph{stabilizability radius}, which, similarly to the subradius, is also 
 related to the smallest possible rate of growth over all switching signals, but now it is assumed that one can choose the matrix sequence \emph{depending on the initial condition $x(0).$}} 
The stabilizability radius is thus smaller than the previously introduced subradius.  It has only been introduced formally recently \cite{SICON-feedback}, but the reader can find earlier implicit studies of it in \cite{geromel2006stability,geromel2006stabilityd,stanford}. 

Following~\cite{SICON-feedback}, we introduce the following definition,
which is the main topic of study of the present work:
\begin{definition}
The 
{stabilizability} radius of $\mathcal{M}$ is defined as
\[\tilde{\rho}(\mathcal{M}) = \sup_{x_0\in\mathbb{R}^n}\tilde{\rho}_{x_0}(\mathcal{M}),\]
where
\[\tilde{\rho}_{x_0}(\mathcal{M})\triangleq \inf\left\{\lambda\geq 0 \ \Bigl| \ 
\begin{array}{c}
\exists x(\cdot) \mbox{ solution of \eqref{ss} with initial point }x_0
\mbox{ and }\\ M>0 \mbox{ s.t. } |x(k)|\leq M \lambda^k |x_0| \mbox{ for any }
k\geq 0
\end{array}
\right\}\]
minimizes the exponential growth rate of the trajectories of~\eqref{ss} starting at $x_0$.
As shown in~\cite{SICON-feedback} we may equivalently write
\[\tilde{\rho}(\mathcal{M}) = \inf\left\{\lambda\geq 0 \ \Bigl| \ 
\begin{array}{c}
\exists M>0 \mbox{ s.t. } |x(k)|\leq M \lambda^k |x_0| \mbox{ for any }x_0\in\mathbb{R}^n,\\
k\geq 0\mbox{ and for some solution $x(\cdot)$ of~\eqref{ss} starting at }x_0
\end{array}
\right\}.\]
\end{definition}

The 
stabilizability radius is related to the possibility of stabilizing~\eqref{ss} by appropriately choosing the switching law, either in open loop form or in feedback form, see~\cite[Proposition~2.5 and Corollary~3.4]{SICON-feedback}. 
{ Note that the systems of the form~\eqref{ss} represent a special class of the systems considered in nonlinear control~\cite{sontag-book} and of variable structure systems~\cite{yan2017variable}, where stabilization issues play a crucial role. Hence the study of the stabilizability radius is an important step to understand the complexity of the stabilization problem in a more general context than the one considered in this paper.}

We recall the following basic properties {of the stabilizability radius}:
\begin{proposition}
\label{lem-basic}
The 
{stabilizability}  radius satisfies  the following basic properties:
\begin{itemize}
\item[(i)] Homogeneity: For any compact set of matrices $\cM,$ $\forall \gamma >0,$ $\tilde \rho (\gamma \cM) = \gamma \tilde \rho (\cM)$,
\item[(ii)] For any compact set of matrices $\cM,$ $\forall k \in \n,$ $\tilde \rho (\cM^k) = \tilde \rho (\cM)^k,$ {where $\cM^k$ denotes the set of $k$ products of matrices in $\mathcal{M}$.}
\end{itemize}
\end{proposition}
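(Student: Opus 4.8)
The plan is to establish both properties by exploiting, in each case, an explicit correspondence between trajectories, and to argue throughout with the second (uniform) characterization of $\tilde\rho$ recalled in the Definition, so that one constant $M$ works for all initial conditions simultaneously.

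For the homogeneity property (i), the key remark is that scaling the matrix set by $\gamma>0$ scales every trajectory by the corresponding power of $\gamma$: for a fixed switching signal $\sigma$ and $x_0\in\re^n$, the solution of $x(k+1)=\gamma A_{\sigma_k}x(k)$ equals $\gamma^k$ times the solution of~\eqref{ss} with the same signal and initial point. Hence a bound $|x(k)|\le M\lambda^k|x_0|$ for the system $\gamma\cM$ is equivalent to the same bound with $\lambda$ replaced by $\lambda/\gamma$ for the system $\cM$, so the set of admissible $\lambda$ for $\gamma\cM$ is exactly $\gamma$ times that for $\cM$; taking infima gives $\tilde\rho(\gamma\cM)=\gamma\,\tilde\rho(\cM)$. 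This part is essentially immediate.

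For property (ii) I would prove the two inequalities separately. The inequality $\tilde\rho(\cM^k)\le\tilde\rho(\cM)^k$ follows by \emph{subsampling}: given $\lambda>\tilde\rho(\cM)$ with associated constant $M$, for each $x_0$ pick a trajectory $x(\cdot)$ of~\eqref{ss} with $|x(j)|\le M\lambda^j|x_0|$; then $l\mapsto x(kl)$ is a trajectory of the switched system associated with $\cM^k$ starting at $x_0$ and satisfies $|x(kl)|\le M(\lambda^k)^l|x_0|$, so $\lambda^k$ is admissible for $\tilde\rho(\cM^k)$, and we let $\lambda\downarrow\tilde\rho(\cM)$. For the reverse inequality $\tilde\rho(\cM)^k\le\tilde\rho(\cM^k)$ the idea is to \emph{interpolate}: given $\mu>\tilde\rho(\cM^k)$ with constant $M$ and, for each $x_0$, a trajectory $z(\cdot)$ of the $\cM^k$-system with $|z(l)|\le M\mu^l|x_0|$, factor each step $z(l+1)=A_{\sigma^{(l)}_{k-1}}\cdots A_{\sigma^{(l)}_0}z(l)$ and build a trajectory $x(\cdot)$ of~\eqref{ss} with $x(kl)=z(l)$ by inserting the $k-1$ partial products between consecutive sampling instants.

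The step I expect to be the only real obstacle is bounding this interpolated trajectory at the intermediate times $j=kl+r$ with $0\le r<k$, and it is exactly here that compactness of $\cM$ enters: it provides a finite $\hat C:=\max\{1,\sup_{A\in\cM}\|A\|\}$, whence $|x(kl+r)|\le\hat C^k|z(l)|\le\hat C^k M\mu^l|x_0|$; then, writing $\nu=\mu^{1/k}$ and using $\mu^l\le\mu^{(kl+r)/k}/\min\{1,\mu\}=\nu^{j}/\min\{1,\mu\}$, one gets $|x(j)|\le M'\nu^j|x_0|$ with $M'=\hat C^k M/\min\{1,\mu\}$, uniformly in $x_0$ and $j$. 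Thus $\mu^{1/k}$ is admissible for $\tilde\rho(\cM)$, and letting $\mu\downarrow\tilde\rho(\cM^k)$ (with $\mu\to0^+$ covering the degenerate case $\tilde\rho(\cM^k)=0$) gives $\tilde\rho(\cM)\le\tilde\rho(\cM^k)^{1/k}$. Combining the two inequalities yields $\tilde\rho(\cM^k)=\tilde\rho(\cM)^k$.
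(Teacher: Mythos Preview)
The paper does not actually prove this proposition: it is introduced with ``We recall the following basic properties'' and no proof environment follows, so there is nothing in the paper to compare against. Your argument is correct and is the standard one: the trajectory-rescaling bijection for (i), and the subsampling/interpolation pair for (ii), with compactness of $\cM$ supplying the uniform one-step bound $\hat C$ needed to control the interpolated trajectory at intermediate times. The estimate $\mu^l\le \nu^{j}/\min\{1,\mu\}$ with $\nu=\mu^{1/k}$ and $j=kl+r$, $0\le r<k$, checks out in both regimes $\mu\ge 1$ and $\mu<1$, so the constant $M'=\hat C^k M/\min\{1,\mu\}$ is indeed uniform in $x_0$ and $j$.
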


We illustrate the above concept with an example, which we will use as a running example throughout this paper:
\begin{example}\label{ex-urbano}[Based on an example by Stanford and Urbano~\cite{stanford}]
Let us consider $\mathcal{M}=\{A_1,A_2\}$, where
\[A_1=\left(\begin{array}{cc}\cos\frac{\pi}4 & \sin\frac{\pi}4 \\ -\sin\frac{\pi}4 & \cos\frac{\pi}4 \end{array}\right)=\frac{\sqrt{2}}{2}\left(\begin{array}{cc}1&1\\-1&1\end{array}\right), \quad A_2=\left(\begin{array}{cc} \frac12 & 0 \\ 0 & 2  \end{array}\right).\]
It is easy to see that the norm of any product of matrices in $\cM$ is larger than or equal to one. Indeed, $\det (A_{\sigma(k)}\dots A_{\sigma(0)})=1$ independently on the switching sequence, which implies that $\|A_{\sigma(k)}\dots A_{\sigma(0)}\|\geq 1.$ 
However, the definition of the 
{stabilizability} radius allows the switching sequence {to depend on the value of $x(0),$ so that the 
{stabilizability} radius can be smaller than one, and it is the case in our example.  
Indeed, for any value of $x\in \mathbb{R}^2$} there always exists a natural number $n_x\leq 3$ such that the absolute value of the angle formed by the vector $A_1^{n_x}x$ and the $x_1$ axis is smaller than or equal to $\pi/8$. As a consequence it is easy to obtain the estimate $|A_2A_1^{n_x}x|<0.9 |x|$.
{Hence, starting from any initial condition $x(0)$ we can easily construct recursively a switching sequence in such a way that the corresponding solution $x(k)$ of System \eqref{ss} satisfies $|x(k)|\leq 2 \times 0.9^{k/4}|x(0)|$, which implies $\tilde\rho(\cM)<0.9^{1/4}\sim 0.974$.}
\end{example}

In~\cite[Theorem~4.7]{SICON-feedback} it was shown that the minimum singular value computed among all matrices of $\mathcal{M}$ provides a lower bound for $\tilde{\rho}(\mathcal{M}):$
\begin{theorem}\label{lem-sing} \cite[Theorem~4.7]{SICON-feedback}
One has $\tilde\rho(\cM)\geq \min_{A\in\mathcal{M}}\sigma_{m}(A)$ where $\sigma_{m}(A)$ is the smallest singular value of the matrix $A$.
\end{theorem}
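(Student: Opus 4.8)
The plan is to exploit the elementary fact that multiplication by a matrix $A$ cannot contract a vector by more than the factor $\sigma_m(A)$: for every $x\in\re^n$ one has $|Ax|\geq \sigma_m(A)\,|x|$. Setting $s:=\min_{A\in\mathcal{M}}\sigma_m(A)$, this immediately gives the uniform one‑step estimate $|Ax|\geq s\,|x|$ valid for every $A\in\mathcal{M}$ and every $x\in\re^n$.

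First I would propagate this bound along an arbitrary trajectory. Let $x(\cdot)$ be any solution of \eqref{ss}, so that $x(k+1)=A_{\sigma_k}x(k)$ with $\sigma_k\in\{1,\dots,m\}$. Applying the one‑step estimate at each step and arguing by induction on $k$ yields $|x(k)|\geq s^k\,|x(0)|$ for all $k\geq 0$, and this holds for \emph{every} solution, irrespective of the switching signal $\sigma$.

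Next I would conclude by contradiction using the definition of $\tilde\rho(\mathcal{M})$. Suppose $\tilde\rho(\mathcal{M})<s$ and fix any $x_0\neq 0$, so that $\tilde\rho_{x_0}(\mathcal{M})\leq\tilde\rho(\mathcal{M})<s$. Since the set of admissible $\lambda$ in the definition of $\tilde\rho_{x_0}(\mathcal{M})$ is upward closed — enlarging $\lambda$ preserves the inequality $|x(k)|\leq M\lambda^k|x_0|$ with the same $M$ and the same solution — there exist $\lambda$ with $\tilde\rho_{x_0}(\mathcal{M})<\lambda<s$, a solution $x(\cdot)$ of \eqref{ss} starting at $x_0$, and $M>0$ such that $|x(k)|\leq M\lambda^k|x_0|$ for all $k\geq 0$. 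Combining this with the lower bound from the previous step gives $s^k|x_0|\leq M\lambda^k|x_0|$, that is $(s/\lambda)^k\leq M$ for all $k$; since $s/\lambda>1$ and $x_0\neq 0$, the left‑hand side is unbounded, a contradiction. Hence $\tilde\rho(\mathcal{M})\geq s$, which is the claim. (One could equally run the argument directly from the second, equivalent form of the definition of $\tilde\rho(\mathcal{M})$.)

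I do not anticipate any real obstacle: the heart of the proof is the single norm inequality $|Ax|\geq\sigma_m(A)|x|$ pushed through a trajectory. The only points needing a little care are the bookkeeping around the infimum and supremum in the definition — specifically, checking that the set of admissible exponential rates is an interval so that a witnessing $\lambda<s$, and hence a witnessing trajectory, genuinely exists — and, if $\mathcal{M}$ is only compact rather than finite, noting that $A\mapsto\sigma_m(A)$ is continuous so that the minimum defining $s$ is attained (otherwise one simply replaces $\min$ by $\inf$ throughout).
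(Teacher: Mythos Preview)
Your argument is correct: the one-step inequality $|Ax|\geq \sigma_m(A)\,|x|$ propagated along any trajectory gives $|x(k)|\geq s^k|x(0)|$, and this is incompatible with any exponential upper bound of rate $\lambda<s$. The bookkeeping around the infimum (upward closure of the admissible set of rates) is handled properly.

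There is nothing to compare against in this paper: the statement is quoted from~\cite[Theorem~4.7]{SICON-feedback} and no proof is reproduced here. Your proof is the natural one and almost certainly coincides with the original argument in that reference.
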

 However, the study of Example \ref{ex-urbano} seems to suggest that such a lower bound does not represent a good approximation of the actual value of the 
 {stabilizability} radius:

\begin{example} (Example \ref{ex-urbano}, continued.) A simple application of Theorem~\ref{lem-sing} to the matrices from Example \ref{ex-urbano} gives $\tilde \rho(\cM) \geq 1/2$, and applying the same result to $\cM^k,$ together with Item~$(ii)$ in Proposition~\ref{lem-basic}, does not improve the bound. \end{example}

This raises the following open question \cite[Open Question 2]{SICON-feedback}: \textit{Is it possible to improve Theorem \ref{lem-sing} and provide a generally better formula for a lower bound? In particular, how can one compute a better lower bound on $\tilde{\rho}(\mathcal{M})$ in Example \ref{ex-urbano}?}

Some techniques have been proposed in the control literature, which allow to derive an upper bound on the 
{stabilizability}  radius, mainly based on semidefinite programming  \cite{fiacchini-jungers,geromel2006stability,fiacchini-girard-jungers}. However, it seems much harder to provide a tight lower bound.
In this manuscript we tackle the above question.  By a closer inspection at all the singular values of a matrix product, we provide a much better lower bound, which can be improved by increasing the length of the products. In particular, we improve the lower bound previously obtained by applying Theorem \ref{lem-sing} to Example~\ref{ex-urbano}.
Then, in Section \ref{sec-dependence}, we show that the present result is actually general and has rather nonintuitive consequences concerning the regularity of the radius $\tilde{\rho}_{x_0}$ in terms of the initial condition $x_0.$ 

{\textbf{Notation:} {
For a matrix $M\in \mathbb{R}^{n\times n}$ we denote by $s_1(M)\leq s_2(M)\dots \leq s_n(M)$ the corresponding singular values.
The sphere in $\mathbb{R}^n$ is $S^{n-1}=\{x\in\mathbb{R}^n \,\vert\, \|x\|=1\}$.
Finally, we denote by $\mathrm{clos}(A)$ the closure of a subset $A$ of a topological space.}

\section{Lower bound for the {stabilizability}  radius} 

In this section we first provide a simple and actionable lower bound on the 
{stabilizability}  radius, based on the determinants of the matrices in $\cM.$ 
{We then provide a  more powerful bound, which is in some sense less actionable because it relies on more involved computations.}
{Theorem~\ref{lem-sing} provides} a simple lower bound on the 
{stabilizability} radius 
in terms of the smallest singular value of the matrices. We start with a simple lemma that pushes this reasoning further, by 
{ pointing out a geometric property of a given matrix related both to its smallest singular value and to its determinant.}

\begin{lemma}
Let $n\geq 2$. Given a matrix $A\in \mathbb{R}^{n\times n}$ and $r>0$ we define $S_{r,A}=\{x\in S^{n-1}\,\vert\,\|Ax\|\leq r\}$. Then 
\begin{itemize}
\item $S_{r,A}$ is empty if $s_1(A)>r$,
\item for every nonsingular matrix $A\in \mathbb{R}^{n\times n}$, and if  $s_1(A)\leq r$, 
the measure of $S_{r,A}$  is bounded by $m_{n-1} \min\{r^n|\det A|^{-1},1\}$, 
where $m_{n-1}$ is the surface area of the unit sphere.
\end{itemize}
\label{measure}
 \end{lemma}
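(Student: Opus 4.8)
The plan is to treat the two bullet points separately, the first being immediate and the second requiring a volume-comparison argument. For the first bullet, if $s_1(A)>r$ then for every $x\in S^{n-1}$ one has $\|Ax\|\ge s_1(A)\|x\|=s_1(A)>r$, so $S_{r,A}$ contains no point of the sphere and is therefore empty.

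For the second bullet, write $\sigma$ for the $(n-1)$-dimensional surface measure on $S^{n-1}$, so that $\sigma(S^{n-1})=m_{n-1}$, and set $B_r=\{y\in\mathbb{R}^n\,:\,\|y\|\le r\}$. The set $S_{r,A}=S^{n-1}\cap A^{-1}(B_r)$ is closed, hence measurable, and $\sigma(S_{r,A})\le m_{n-1}$ trivially; the content of the statement is the bound $\sigma(S_{r,A})\le r^n|\det A|^{-1}m_{n-1}$. The idea is to pass to the solid cone $C=\{tx\,:\,t\in[0,1],\ x\in S_{r,A}\}$ that $S_{r,A}$ subtends at the origin. Integrating in polar coordinates gives $\mathrm{vol}(C)=\int_{S_{r,A}}\!\int_0^1 t^{n-1}\,dt\,d\sigma(x)=\tfrac1n\,\sigma(S_{r,A})$; in particular, for $S_{r,A}=S^{n-1}$ this recovers $\mathrm{vol}(B_1)=m_{n-1}/n$.

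Next I would exploit the linearity of $A$. Every element of $AC$ has the form $t\,Ax$ with $t\in[0,1]$ and $x\in S_{r,A}$, and hence has norm $t\|Ax\|\le\|Ax\|\le r$; thus $AC\subseteq B_r$. Since $A$ is nonsingular, the change-of-variables formula for Lebesgue measure gives $|\det A|\,\mathrm{vol}(C)=\mathrm{vol}(AC)\le\mathrm{vol}(B_r)=r^n m_{n-1}/n$. Combining this with $\mathrm{vol}(C)=\tfrac1n\sigma(S_{r,A})$ and rearranging yields $\sigma(S_{r,A})\le r^n|\det A|^{-1}m_{n-1}$, which together with the trivial bound $\sigma(S_{r,A})\le m_{n-1}$ gives the claimed estimate.

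There is no real obstacle here; the one conceptual point worth isolating is that the image under the \emph{linear} map $A$ of the cone $C$ is still a set all of whose points lie within distance $r$ of the origin (scaling a vector down never increases its norm), so that it fits inside $B_r$ — and this, combined with the elementary identity relating the surface measure of a spherical region to the volume of the cone it subtends and the determinant scaling of Lebesgue measure, is the entire argument. Everything else (measurability of $S_{r,A}$ and $C$, the polar-coordinate formula) is routine.
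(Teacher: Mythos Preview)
Your proof is correct and follows essentially the same approach as the paper: both pass to the solid cone subtended by $S_{r,A}$, observe that its image under $A$ lies in the ball $B_r$, apply the determinant scaling of Lebesgue measure, and then convert the volume bound back to a surface-measure bound via the proportionality between the volume of a cone and the surface area of its spherical cap. Your write-up is slightly more explicit (you spell out the polar-coordinate computation and the inclusion $AC\subseteq B_r$), but the argument is identical in substance.
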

\begin{proof}
The fact that the set $S_{r,A}$ is empty if $s_1(A)>r$ follows from the definitions.
In order to prove the second part of the lemma
we define the cone
\[C_{r,A} = \{x\in\mathbb{R}^n\,\vert\,\|x\|\leq 1,\ x/\|x\|\in S_{r,A}\}\] 
and consider the image of $C_{r,A}$ by the matrix $A$.
We know that the corresponding volumes scale by a factor $|\det(A)|$ and that the image of $C_{r,A}$ is completely contained inside the closed ball of radius $r$.
We deduce that 
\[V_{r,A}\leq  M_n \min\{r^n|\det(A)|^{-1},1\}\]
where $V_{r,A}$ is the volume of $C_{r,A}$ and $M_n$ denotes the volume of the unit ball. The desired bound is then obtained by observing that the surface area of $S_{r,A}$ is equal to $V_{r,A}m_{n-1}/M_n$.
\end{proof}
\begin{remark}
It is possible to show that the surface area of $S_{r,A}$ is bounded by $2(\pi r)^{n-1}s_1(A)|\det A|^{-1}$ whenever $A$ is nonsingular and $s_1(A)\leq r$. 
This improves the estimate provided by Lemma~\ref{measure} in the case $s_1(A)\ll r\leq |\det(A)|^{1/n}$.
We omit the proof of this result as it is more involved than that of Lemma~\ref{measure} and the improved estimate does not allow to enhance the later results.
\end{remark}

Below we exploit the previous lemma in order to provide a first lower bound to $\tilde{\rho}(\mathcal{M})$. Roughly speaking, the idea is that for any $\lambda>\tilde{\rho}(\mathcal{M})$ and any $x\in S^{n-1}$ there should exist a matrix $A$ in $\mathcal{M}^k$, for $k$ large enough, such that $Ax$ belongs to the ball of radius $\lambda^k$. In other words, in the notations of the previous lemma, the union of all the sets $S_{\lambda^k,A}$ for $A\in \mathcal{M}^k$ must cover the whole sphere $S^{n-1}$.

\begin{theorem}
\label{lower-bound-0} Consider System \eqref{ss} and assume that $\cM$ only contains nonsingular matrices. Then, the stabilizing radius satisfies
\begin{equation}\label{eq-lower-bound}
\tilde{\rho}(\mathcal{M})\geq \tilde{\rho}_-\triangleq\Big(\sum_{h=1}^m|\det A_h|^{-1}\Big)^{-1/n}.
\end{equation}
\end{theorem}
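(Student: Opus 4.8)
The plan is to argue by contradiction. Suppose $\tilde\rho(\mathcal M) < \tilde\rho_-$, and fix $\lambda$ with $\tilde\rho(\mathcal M) < \lambda < \tilde\rho_-$. By the equivalent characterization of $\tilde\rho$ in the Definition, there is a constant $M>0$ such that for every $x_0\in S^{n-1}$ there is a solution $x(\cdot)$ of \eqref{ss} with $|x(k)|\le M\lambda^k$ for all $k$. Pick $k$ large enough that $M\lambda^k \le \mu^k$ for some $\mu$ still satisfying $\lambda<\mu<\tilde\rho_-$ (possible since $M^{1/k}\to 1$). Then for every $x_0\in S^{n-1}$ there exists at least one product $A\in\mathcal M^k$ with $\|A x_0\|\le \mu^k$, i.e. $x_0\in S_{\mu^k,A}$. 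Hence the sphere $S^{n-1}$ is covered by the finite union $\bigcup_{A\in\mathcal M^k} S_{\mu^k,A}$.

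Next I would bound the total measure of this cover using Lemma~\ref{measure}. Each $A\in\mathcal M^k$ is a product $A_{i_k}\cdots A_{i_1}$, so $|\det A|^{-1} = \prod_{j=1}^k |\det A_{i_j}|^{-1}$, and Lemma~\ref{measure} gives $\mathrm{meas}(S_{\mu^k,A}) \le m_{n-1}\,\mu^{kn}|\det A|^{-1}$ (dropping the $\min$ with $1$, which only weakens the bound). Summing over all $m^k$ words and using the multiplicativity of the determinant, the sum factorizes:
\[
\sum_{A\in\mathcal M^k}|\det A|^{-1} = \Big(\sum_{h=1}^m |\det A_h|^{-1}\Big)^{k} = \tilde\rho_-^{-kn}.
\]
Therefore the covering forces
\[
m_{n-1} \le \sum_{A\in\mathcal M^k}\mathrm{meas}(S_{\mu^k,A}) \le m_{n-1}\,\mu^{kn}\,\tilde\rho_-^{-kn} = m_{n-1}\,(\mu/\tilde\rho_-)^{kn}.
\]
Since $\mu<\tilde\rho_-$, the right-hand side tends to $0$ as $k\to\infty$ (and in fact is already $<m_{n-1}$), which is the desired contradiction.

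The only delicate point is the passage from "$\tilde\rho(\mathcal M)<\lambda$" to "for a single well-chosen $k$, every point of the sphere lies in some $S_{\mu^k,A}$ with $A\in\mathcal M^k$." One must be careful that the constant $M$ is uniform in $x_0$ — which is exactly what the second (equivalent) formula in the Definition provides — and then absorb $M$ into the exponential by taking $k$ large, trading $\lambda$ for any $\mu>\lambda$. After that the argument is just the multiplicativity of the determinant plus the elementary measure bound of Lemma~\ref{measure}, so I expect no further obstacles; the homogeneity in Proposition~\ref{lem-basic}(i) could alternatively be used to normalize, but it is not needed here.
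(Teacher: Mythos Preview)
Your proof is correct and follows essentially the same route as the paper: use the uniform constant from the definition of $\tilde\rho$, cover $S^{n-1}$ by the sets $S_{r,A}$ for $A\in\mathcal M^k$, bound their total measure via Lemma~\ref{measure} and the multiplicativity of the determinant, and let $k\to\infty$. The only cosmetic difference is that the paper carries the constant $C$ and takes the limit $T\to\infty$ directly (so $C^{1/T}\to 1$), whereas you absorb $M$ into an intermediate $\mu\in(\lambda,\tilde\rho_-)$; both are equivalent.
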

\begin{proof}
Let us fix $\lambda = \rho + \epsilon$ for some $\epsilon>0,$ and fix some $T \in\mathbb N. $ By Lemma~\ref{measure}, for any product $A\in\mathcal{M}^T,$ the set $S_{r,A}$ of unit vectors which are mapped inside the ball of radius $r$ by $A$ has measure bounded by $m_{n-1} r^n |\det(A)|^{-1}$, where $m_{n-1}$ is the surface area of the unit sphere. This implies that the set $\cup_{A\in \mathcal{M}^T} S_{r,A}$ has measure bounded by 
\begin{align*}
m_{n-1} r^n \sum_{A\in \mathcal{M}^T}|\det(A)|^{-1}
 & = m_{n-1} r^n \sum_{\sigma\in \{1,\ldots,m\}^T}\prod_{i=1,\dots,T}|\det(A_{\sigma_i})|^{-1} \\
& = m_{n-1} r^n\Big(\sum_{h=1}^m|\det(A_h)|^{-1}\Big)^{T}.
\end{align*}
Now, by definition, for any $\lambda>\tilde{\rho}(\mathcal{M})$ there exists a positive constant $C$ such that any $x\in S^{n-1}$ may be mapped to a ball of radius $C\lambda^T$ by at least one product of $T$ matrices in $\mathcal{M}$. Setting $r=C\lambda^T$, we deduce that the set $\cup_{A\in\mathcal{M}^T} S_{r,A}$ must cover the whole sphere $S^{n-1}$, so that 
\[m_{n-1} C^n\lambda^{nT}\Big(\sum_{h=1}^m |\det(A_h)|^{-1}\Big)^{T}\geq m_{n-1}.\]
Letting $T$ tend to infinity we get 
\[\lambda\geq \Big(\sum_{h=1}^m|\det(A_h)|^{-1}\Big)^{-1/n},\]
and the thesis follows since, by definition, $\lambda=\tilde{\rho}+\epsilon$ for an arbitrary small $\epsilon.$
\end{proof}

\begin{example} (Example \ref{ex-urbano}, continued.) 
Despite the strikingly simple idea leading to it, the previous result allows us to provide an answer to~\cite[Open Question 2]{SICON-feedback}.
Indeed, let us consider again the set $\cM$ in Example \ref{ex-urbano}.
One has that $\tilde{\rho}_- = \sqrt{2}/2$, considerably improving the lower bound $\min\{s_1(A_1),s_1(A_2)\}=1/2$ obtained applying~\cite[Theorem~4.7]{SICON-feedback}. \end{example}

It is worth noticing that, for an arbitrary given set $\mathcal{M}$, Theorem~\ref{lower-bound-0} might not necessarily improve on the simple lower bound from Theorem~\ref{lem-sing}: $\tilde{\rho}(\mathcal{M})\geq \min_{A\in\mathcal{M}} s_1(A).$ (Indeed consider for instance a trivial example with two rotation matrices. For such an example we have $\min_A s_1(A) = 1$ and $\tilde{\rho}_- = \sqrt{2}/2.$) Intuitively, on the one hand 
{Theorem~\ref{lem-sing}} estimates the maximal norm contraction at each step, among all available matrices and all initial conditions, but it does not take into account the fact that trajectories may move away from the most contracting directions;
on the other hand, Theorem~\ref{lower-bound-0} is essentially based on the assumption of a homogeneous occupation measure, and it does not exploit the possible presence of privileged directions or modes which may be used for optimizing the contraction rate. The result below mingles the two approaches.

\begin{theorem}
\label{lower-bound}
 Consider System \eqref{ss} and assume that $\cM$ only contains nonsingular matrices. Let us denote, for simplicity, $\delta_i=s_{1}(A_i)$ and $\Delta_i=|\det A_i|$.
Consider the simplex $\Sigma_m$ defined as
\[\Sigma_m=\{\nu\in [0,1]^m\,\vert\,\sum_{h=1}^m\nu_h =1  \},\]
the map
\[
\Psi:\Sigma_m\to \mathbb{R},\qquad \Psi(\nu) =  \sum_{h=1}^m\nu_h \log\left(\frac{\nu_h\Delta_h}{\delta_h^n}\right)
\]
and the element $\bar \nu\in \Sigma_m$ whose components are defined by $\bar\nu_h=\frac{\Delta_h^{-1}}{\sum_{j=1}^m\Delta_j^{-1}}$.
Then we have the following alternative:
\begin{itemize}
\item[(a)] If $\Psi(\bar\nu)\geq 0$, then $\tilde{\rho}(\mathcal{M})\geq \tilde{\rho}_-\geq\min_{h=1,\dots,m}\delta_h$.
\item[(b)] If $\Psi(\bar\nu)<0$, then the set $\mathcal{Z}=\{\nu\in \Sigma_m\,\vert\,\Psi(\nu)=0\}$ is nonempty
and, setting  
\begin{equation}
\tilde{\rho}^*_-\triangleq \min_{\nu\in \mathcal{Z}}\prod_{h=1}^m\delta_h^{\nu_h}, 
\label{lagrange}
\end{equation}
we have $\tilde{\rho}^*_-\geq\min_{h=1,\dots,m}\delta_h$ and
\begin{equation}
\tilde{\rho}(\mathcal{M})\geq \tilde{\rho}^*_->\tilde{\rho}_-.
\label{eq-est-opt}
\end{equation}
Moreover, {if $\delta_i\neq \delta_j$ for every $i\neq j$,} the argument of the minimum in~\eqref{lagrange} takes the form 
\begin{equation}
\label{minimizing}
\hat{\nu}_h(\beta)=\left\{\begin{array}{ll} 0 & \mbox{if }h\notin S \\ 
\frac{ \delta_h^{\beta} \Delta_h^{-1}}{\sum_{j\in S} \delta_j^{\beta} \Delta_j^{-1}} & \mbox{if }h\in S\end{array}\right.
\end{equation} 
for some real value $\beta$ and $S\subseteq \{1,\dots,m\}$. As a consequence, $\tilde{\rho}^*_-$ may be calculated numerically by solving the scalar equations 
$\Psi(\hat{\nu}_1(\beta),\dots,\hat{\nu}_m(\beta))=0$ obtained for all possible $S\subset \{1,\dots,m\}$.
\end{itemize}
\end{theorem}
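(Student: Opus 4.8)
The plan is to sharpen the covering argument behind Theorem~\ref{lower-bound-0} by using, besides the determinantal identity $|\det A|=\prod_h\Delta_h^{k_h}$, the fact that a product $A\in\mathcal M^T$ built from $k_h$ copies of $A_h$ satisfies $s_1(A)\ge\prod_{h=1}^m\delta_h^{k_h}$ (submultiplicativity of the smallest singular value: $\|ABx\|\ge s_1(A)\|Bx\|\ge s_1(A)s_1(B)\|x\|$). Fix $\lambda>\tilde\rho(\mathcal M)$. As in the proof of Theorem~\ref{lower-bound-0}, there is $C>0$ such that for every $T$ the sets $S_{C\lambda^T,A}$, $A\in\mathcal M^T$, cover $S^{n-1}$, so $m_{n-1}\le\sum_{A\in\mathcal M^T}\mathrm{meas}(S_{C\lambda^T,A})$. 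By Lemma~\ref{measure} each term is $\le m_{n-1}(C\lambda^T)^n|\det A|^{-1}$ and vanishes unless $s_1(A)\le C\lambda^T$; grouping products by their occurrence vector $k=(k_1,\dots,k_m)$ with $k_1+\dots+k_m=T$ (there are $\binom{T}{k_1,\dots,k_m}$ of them), and discarding — via $s_1(A)\ge\prod_h\delta_h^{k_h}$ — those $k$ with $\prod_h\delta_h^{k_h}>C\lambda^T$, one obtains
\[
1\le C^n\lambda^{nT}\sum_{\substack{k_1+\dots+k_m=T\\ \sum_h k_h\log\delta_h\le\log C+T\log\lambda}}\binom{T}{k_1,\dots,k_m}\prod_{h=1}^m\Delta_h^{-k_h}.
\]

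Next I would pass to the exponential scale. With the standard bound $\binom{T}{k_1,\dots,k_m}\le e^{TH(k/T)}$, where $H(\nu)=-\sum_h\nu_h\log\nu_h$, and at most $(T+1)^m$ occurrence vectors, $\tfrac1T\log$ of the above sum is bounded by $\max\{G(\nu):\nu\in\Sigma_m,\ \sum_h\nu_h\log\delta_h\le\tfrac{\log C}{T}+\log\lambda\}+o(1)$, with $G(\nu):=H(\nu)-\sum_h\nu_h\log\Delta_h=-\sum_h\nu_h\log(\nu_h\Delta_h)$. Taking $\tfrac1T\log$ in the displayed inequality and letting $T\to\infty$ (the constraint converges to $\mu(\nu)\le\lambda$, writing $\mu(\nu):=\prod_h\delta_h^{\nu_h}$) yields $n\log\lambda+\max\{G(\nu):\nu\in\Sigma_m,\ \mu(\nu)\le\lambda\}\ge0$; since $G$ is continuous and $\Sigma_m$ compact, this holds iff some $\nu\in\Sigma_m$ has both $\log\mu(\nu)\le\log\lambda$ and $-G(\nu)/n\le\log\lambda$, i.e. iff $\log\lambda\ge\min_{\nu\in\Sigma_m}f(\nu)$ with $f(\nu):=\max\{\log\mu(\nu),-G(\nu)/n\}$. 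As this holds for all $\lambda>\tilde\rho(\mathcal M)$,
\[
\tilde\rho(\mathcal M)\ \ge\ \exp\Big(\min_{\nu\in\Sigma_m}f(\nu)\Big).
\]

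It remains to evaluate $\min_\nu f$. Three facts: (i) $-G(\nu)=\Psi(\nu)+n\log\mu(\nu)$, whence $f(\nu)=\log\mu(\nu)+\tfrac1n\max\{\Psi(\nu),0\}$; (ii) $-G$ is strictly convex on $\Sigma_m$, so it has a unique minimizer, which a direct stationarity computation identifies with $\bar\nu$, and $-G(\bar\nu)/n=\log\tilde\rho_-$; (iii) at every vertex $e_h$ of $\Sigma_m$, $\Psi(e_h)=\log(\Delta_h/\delta_h^n)\ge0$ because $\Delta_h=\prod_i s_i(A_h)\ge s_1(A_h)^n=\delta_h^n$. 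From $f(\nu)\ge-G(\nu)/n\ge\log\tilde\rho_-$ and $f(\nu)\ge\log\mu(\nu)\ge\min_h\log\delta_h$ (a weighted geometric mean bounds the minimum) we already get $\tilde\rho(\mathcal M)\ge\max\{\tilde\rho_-,\min_h\delta_h\}$ unconditionally. If $\Psi(\bar\nu)\ge0$ (case~(a)), then $f(\bar\nu)=-G(\bar\nu)/n=\log\tilde\rho_-$, so $\min_\nu f=\log\tilde\rho_-$, and moreover $\log\tilde\rho_-=\log\mu(\bar\nu)+\Psi(\bar\nu)/n\ge\log\mu(\bar\nu)\ge\min_h\log\delta_h$; this proves~(a).

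In case~(b), $\Psi(\bar\nu)<0$: then $\mathcal Z\neq\emptyset$ by the intermediate value theorem applied to $\Psi$ along a segment from $\bar\nu$ to a vertex (or trivially, when all $\Psi(e_h)=0$, since then the vertices lie in $\mathcal Z$), and the crux is that $\min_{\nu\in\Sigma_m}f(\nu)=\min_{\nu\in\mathcal Z}\log\mu(\nu)=\log\tilde\rho^*_-$. Here ``$\le$'' holds because $f=\log\mu$ on $\mathcal Z$; for ``$\ge$'', let $\nu^*$ minimize $f$. If $\Psi(\nu^*)\ge0$, then on $\{\Psi\ge0\}$ one has $f=-G/n$, and the minimum of $-G$ over the closed set $\{\Psi\ge0\}$ is attained on $\mathcal Z$ (otherwise one could decrease $-G$ by moving toward $\bar\nu$, using strict convexity and $\Psi(\bar\nu)<0$); if $\Psi(\nu^*)<0$, move along the segment from $\nu^*$ to the vertex $e_{h_0}$ with $\delta_{h_0}=\min_h\delta_h$ and combine the intermediate value theorem with the linearity of $\log\mu$ to reach a point of $\mathcal Z$ with no larger $\log\mu$. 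This gives $\tilde\rho(\mathcal M)\ge\tilde\rho^*_-$; the strict inequality $\tilde\rho^*_->\tilde\rho_-$ follows since $\min_\nu f=\log\tilde\rho_-$ would force the minimizer of $f$ to be $\bar\nu$ (uniqueness in~(ii)) with $\Psi(\bar\nu)\ge0$, contradicting~(b); and $\tilde\rho^*_-\ge\min_h\delta_h$ because $\mu(\nu)\ge\min_h\delta_h$ for every $\nu$. Finally, minimizing the linear map $\nu\mapsto\sum_h\nu_h\log\delta_h$ over $\mathcal Z$, a minimizer lying in the relative interior of its support $S$ obeys the Lagrange conditions $\log\delta_h=\gamma+\beta\big(\log(\nu_h\Delta_h/\delta_h^n)+1\big)$ for $h\in S$; solving for $\nu_h$ gives $\nu_h\propto\delta_h^{1/\beta+n}\Delta_h^{-1}$, i.e.\ $\hat\nu_h(\beta)$ as in~\eqref{minimizing} after relabelling the free parameter, the hypothesis $\delta_i\neq\delta_j$ ruling out the degenerate multiplier $\beta=0$ (which would force all $\delta_h$, $h\in S$, equal). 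Running over all supports $S$ and solving the scalar equations $\Psi(\hat\nu_1(\beta),\dots,\hat\nu_m(\beta))=0$ then computes $\tilde\rho^*_-$. The steps I expect to be most delicate are the asymptotic estimate — handling the vanishing relaxation $\tfrac{\log C}{T}$ of the constraint and the continuity of the constrained maxima — and the ``$\ge$'' direction in case~(b), which rests on the geometry of the level set $\mathcal Z$ and on strict convexity of $-G$.
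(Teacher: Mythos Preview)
Your argument is correct and follows essentially the same route as the paper's own proof: the covering inequality from Lemma~\ref{measure}, grouping products of length $T$ by their occurrence vectors, imposing the additional constraint $\sum_h k_h\log\delta_h\le\log r$ coming from $s_1(A)\ge\prod_h\delta_h^{k_h}$, and then passing to exponential asymptotics in $T$ via Stirling/entropy, reducing everything to a finite-dimensional optimization on the simplex. The paper phrases the resulting problem as ``minimize $\Phi(\nu)=\sum_h\nu_h\log(\nu_h\Delta_h)$ over $\Sigma_m\cap\{\Psi\ge0\}$'' and then shows, by convexity of $\Phi$ and by pushing any feasible point toward $\bar\nu$, that in case~(b) the minimum is attained on $\mathcal Z$; your repackaging as ``minimize $f(\nu)=\max\{\log\mu(\nu),\Phi(\nu)/n\}=\log\mu(\nu)+\tfrac1n\max\{\Psi(\nu),0\}$ over $\Sigma_m$'' is equivalent (since $\Phi/n=\log\mu+\Psi/n$) and arguably cleaner, but it is the same optimization. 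Your treatment of the Lagrange step is likewise the same computation applied to the (equivalent on $\mathcal Z$) objective $\log\mu$ instead of $\Phi$. The one extra case you handle, $\Psi(\nu^*)<0$, is in fact superfluous: once you know $f\ge\Phi/n$ with equality exactly on $\{\Psi\ge0\}$, the minimum of $f$ cannot lie in $\{\Psi<0\}$ strictly below the constrained minimum of $\Phi/n$; but your segment-to-$e_{h_0}$ argument is valid and does no harm.
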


\begin{proof}
If  $A$ is a product of length $T$ containing $n_h$ copies of $A_h$  then $|\det A| = \prod_{h=1}^m\Delta_h^{n_h}$. By Lemma~\ref{measure}, the portion of the unit sphere that is mapped into a ball of radius $r$ has measure  bounded by $m_{n-1}r^n|\det A|^{-1}=m_{n-1}r^n \prod_{h=1}^m\Delta_h^{-n_h}$. 
Moreover one has that $\sum_{h=1}^m n_h\log\delta_h=\log(\prod_{h=1}^m \delta_h^{n_h})\leq\log(s_1(A))$ so that, applying the first item in Lemma~\ref{measure}, it follows that $S_{r,A}$ is empty whenever $\sum_{h=1}^m n_h\log\delta_h>\log r$.
Thus, we obtain the following upper bound on the measure of the union of all sets $S_{r,A}$ among all possible products of length $T$ of matrices of $\mathcal{M}$
\begin{equation}
U(T)=m_{n-1}r^n\sum\frac{T!}{\prod_h n_h!}\frac{1}{\prod_h\Delta_h^{n_h}}=m_{n-1}r^n\sum\frac{T!}{\prod_h (n_h!\Delta_h^{n_h})},
\label{total-area}
\end{equation}
where in the last two equalities the sum is taken over all $m$-tuples of positive integers satisfying 
\begin{equation}
\sum_{h=1}^m n_h\log\delta_h\leq\log r,\qquad \sum_{h=1}^m n_h=T.
\label{constraint2}
\end{equation}
Noticing that $\prod_h (n_h!\Delta_h^{n_h})=\prod_{h,n_h\neq 0} (n_h!\Delta_h^{n_h}),$ we can apply Stirling approximation
\[N!\approx N^{N+\frac12} e^{-N},\quad N>0\]
to~\eqref{total-area} obtaining that $c_1 \tilde{U}(T) \leq U(T) \leq c_2 \tilde{U}(T)$ for some positive numbers $c_1,c_2$ only depending on $m,n$, where
\begin{align*}
 \tilde{U}(T)&=r^n\sum\left(\frac{T}{\prod_{h,n_h\neq 0} n_h}\right)^{\frac12}\frac{T^T}{\prod_{h,n_h\neq 0} \left(n_h\Delta_h\right)^{n_h}}\\
&= r^n\sum\left(\frac{T}{\prod_{h,n_h\neq 0} n_h}\right)^{\frac12} \prod_{h,n_h\neq 0} \left(\frac{n_h}{T}\Delta_h\right)^{-n_h}.
\end{align*}
Since, for any $T>0$  $\frac{T}{\prod_{h,n_h\neq 0} n_h}\leq \frac{T}{\max_h n_h}\leq m$, the expression on the right is bounded by
\[\sqrt{m} r^n\sum\prod_h \left(\frac{n_h}{T}\Delta_h\right)^{-n_h}\leq \sqrt{m}r^n (T+1)^{m-1} \max \prod_h\left(\frac{n_h}{T}\Delta_h\right)^{-n_h}\]
where we estimate  the number of elements in the summation by $\binom{T+m-1}{m-1} \leq (T+1)^{m-1}$, and the maximum is taken over all $m$-tuples of positive integers satisfying~\eqref{constraint2}.

In particular, by replacing each $\frac{n_h}{T}$ with a continuous variable $\nu_h$, the subset of the unit sphere which can be mapped into the ball of radius $\hat C\rho^T$ has measure bounded by 
\begin{equation}
E(\rho,\hat C,T)\triangleq C\hat C^n\rho^{nT}(T+1)^{m-1} \left(\max_{\nu} \prod_h (\nu_h\Delta_h)^{-\nu_h}\right)^T
\label{estimate}
\end{equation}
for some $C>0$, where the $\nu_h$'s satisfy
\begin{subequations}
\begin{align}
&\sum_{h=1}^m \nu_h=1,~~\mbox{where}~~ \nu_h\geq 0~~\forall h,\label{constraint3b}\\
&\sum_{h=1}^m \nu_h\log\delta_h\leq\log \rho.\label{constraint3a}
\end{align}
\end{subequations}
Note that the constraint~\eqref{constraint3b} corresponds to $\nu\in \Sigma_m$.
Whenever $\rho>\tilde{\rho}(\mathcal{M})$ and for some $\hat C$ large enough, $E(\rho,\hat C,T)$ must necessarily be larger than the measure of the $(n-1)$-dimensional unit sphere for every integer $T>0$. In particular, setting
\[u(\rho)\triangleq \lim_{T\to\infty}\frac1T \log E(\rho,\hat C,T)=n\log \rho - \min_{\nu}\sum_h \nu_h\log  (\nu_h\Delta_h),\]
$\rho>\tilde{\rho}(\mathcal{M})$ implies $u(\rho)\geq 0$.
Since $u$ is strictly increasing, we actually have that $\rho>\tilde{\rho}(\mathcal{M})$ implies $u(\rho)> 0$.
As a consequence, if $u(\rho)\leq 0$ then $\rho\leq \tilde{\rho}(\mathcal{M})$ and the problem of finding the maximum $\rho$ satisfying $u(\rho)\leq 0$ under the constraints~\eqref{constraint3b}-\eqref{constraint3a} allows to determine a lower bound to  $\tilde{\rho}(\mathcal{M})$.
Note that $u(\rho)\leq 0$ and~\eqref{constraint3a} imply that the values of $\nu_h$ among which such a maximum must be seek  satisfy
\[
\sum_{h=1}^m \nu_h\log\delta_h\leq\log\rho\leq\frac1n\sum_{h=1}^m \nu_h\log  (\nu_h\Delta_h),
\]
and, as a consequence,
\begin{equation}
\Psi(\nu)=  \sum_{h=1}^m\nu_h \log(\nu_h\Delta_h)-n\sum_{h=1}^m \nu_h\log\delta_h\geq 0.
\label{constraint4}
\end{equation}
Note that~\eqref{constraint4} is always satisfied at the vertices of the simplex $\Sigma_m$, with equality if the corresponding matrix in $\mathcal{M}$ is proportional to an  orthogonal matrix, that is, $\Delta_h=\delta_h^n$.

From what precedes a lower bound $\rho_*$ for $\tilde{\rho}(\mathcal{M})$ should satisfy the following minimization problem:
\begin{equation}
\mbox{find }~\rho_*\triangleq\min_{\nu} e^{\frac1n \Phi(\nu)}\quad \mbox{subject to~\eqref{constraint3b}-\eqref{constraint4}}
\label{problem}
\end{equation}
where
\[\Phi(\nu)\triangleq \sum_{h=1}^m \nu_h\log  (\nu_h\Delta_h). \]
In particular $\Phi$ is convex, as it is the sum of convex functions of a single real variable. Also, $\Phi(\nu) = \Psi(\nu)+n\sum_{h=1}^m\nu_h\log\delta_h\geq n\sum_{h=1}^m\nu_h\log\delta_h\geq n\min_{h=1,\dots,m}\log\delta_h$, which implies 
\[\rho_*\geq \min_{h=1,\dots,m}\delta_h.\]

Consider now the problem of minimizing $\Phi$ under the sole condition~\eqref{constraint3b}, i.e. on the simplex $\Sigma_m$. Since $\frac{\partial\Phi}{\partial\nu_h}$ tends to $-\infty$ if $\nu_h$ goes to $0$, the minimum of $\Phi$ is not attained at the boundary of $\Sigma_m$ and can therefore be computed using Lagrange multipliers. In particular the value $\bar\nu\in\Sigma_m$ minimizing $\Phi$ is given by
\[\bar\nu_h=\frac{\Delta_h^{-1}}{\sum_j\Delta_j^{-1}},\]
and it is easy to see that $e^{\frac1n \Phi(\bar\nu)}=\tilde{\rho}_-$. Therefore, if $\Psi(\bar\nu)\geq 0$  we obtain that $\rho_*=\tilde{\rho}_-$ in~\eqref{problem}, concluding the proof of Item~(a). 

Assume now that $\Psi(\bar\nu)< 0$. We claim that the minimum in the definition of $\rho_*$ is attained when $\Psi$ is equal to $0$. 
Indeed, by continuity of $\Psi$, for any $\nu$ satisfying~\eqref{constraint4} there exists a convex combination $\nu_{\lambda}=\lambda\nu+(1-\lambda)\bar\nu$ such that $\Psi(\nu_{\lambda})=0$. Moreover $\Phi(\nu_{\lambda})\leq \Phi(\nu)$ (with equality only if $\nu=\nu_{\lambda}$) by convexity of $\Phi$ and since $\Phi(\bar \nu)<\Phi(\nu)$. 
Thus, without loss of generality, in the problem~\eqref{problem} one may replace the constraint~\eqref{constraint4} with $\Psi=0$, that is we can minimize $\Phi$ restricted to the subset $\mathcal{Z}$.
We observe that $\Phi(\nu)=\Psi(\nu)+n\sum_{h=1}^m\nu_h\log\delta_h=n\sum_{h=1}^m\nu_h\log\delta_h$ for $\nu\in \mathcal{Z}$, from which we deduce that $\rho_*=\tilde{\rho}^*_- = \min_{\nu\in \mathcal{Z}}\prod_{h=1}^m\delta_h^{\nu_h}$.  This proves the first inequality in~\eqref{eq-est-opt}. The last strict inequality in~\eqref{eq-est-opt} is a consequence of the uniqueness of the minimizer $\bar \nu$ obtained with the sole constraint~\eqref{constraint3b}.  

Concerning the last part of the theorem, if the minimum of $\Phi$ restricted to $\mathcal{Z}$ is attained in the interior of $\Sigma_m$ then it can be computed by using Lagrange multipliers. In particular, {if the values $\delta_i$ are all different,} one  finds that
\[\nu_h=\alpha \delta_h^{\beta} \Delta_h^{-1},\]
where $\alpha,\beta$ depend on the parameters $\delta_i,\Delta_i$, $i=1,\dots,m$.
Since $\nu\in \Sigma_m$, we get $\alpha=\alpha(\beta)=(\sum_h \delta_h^{\beta} \Delta_h^{-1})^{-1}$, so that, setting $\hat{\nu}_h(\beta)=\alpha(\beta)\delta_h^{\beta} \Delta_h^{-1}$ the value $\beta$ may be found numerically by solving the equation \[\Psi(\hat{\nu}(\beta))=0.\]
If the minimum of $\Phi$ restricted to $\mathcal{Z}$ is attained at the boundary of $\Sigma_m$ then either it is attained at one vertex of $\Sigma_m$, or in the interior of a subsimplex $\{\nu\in \Sigma_m\,\vert\,\nu_h =0,\ \forall h\notin S\}$, for some $S\subset \{1,\dots,m\}$. In the latter case it minimizes the restriction of $\Phi$ to that subsimplex under the constraint $\Psi=0$ and one can again find the minimizer by means of Lagrange multipliers, obtaining that $\min_{\nu\in \mathcal{Z}}\Phi(\nu)$ is attained at a point $\hat \nu$ of the form~\eqref{minimizing}.
This concludes the proof of Item~(b). 
 \end{proof}

\begin{remark}
\label{rem-improve}
Theorem~\ref{lower-bound} shows that the inequality~$\tilde{\rho}(\mathcal{M})\geq \min_{h=1,\dots,m}\delta_h$ (first provided in~\cite[Theorem~4.7]{SICON-feedback}) is actually strict, except for very special cases.
In particular the strict inequality holds if $\min_{h=1,\dots,m} \delta_h$ is attained only for a single index $h=\bar h$ and $A_{\bar h}$ is not proportional to an orthogonal matrix (that is, if $\Delta_{\bar h}>\delta_{\bar h}^n$). { On the other hand, the lower bound obtained in Theorem~\ref{lower-bound} in the case $(a)$ coincides with the one obtained in Theorem~\ref{lower-bound-0}.}
\end{remark}

\begin{remark}
There are at least two simple ways to possibly improve the lower bound in Theorem~\ref{lower-bound}:
\begin{itemize}
\item Unlike $\tilde{\rho}(\mathcal{M})$, the value $\tilde{\rho}^*_-$ in Theorem~\ref{lower-bound} may actually vary if one performs a  linear coordinate transformation (common to each $A\in \mathcal{M}$), as the singular values are not invariant with respect to linear coordinate transformations. Therefore one can consider the problem of optimizing the lower bound by coordinate changes.
\item We have $\tilde{\rho}(\mathcal{M})=\tilde{\rho}(\mathcal{M}^k)^{1/k}$ (see Proposition~\ref{lem-basic}). In particular, computing a lower bound for  $\tilde{\rho}(\mathcal{M}^k)$ for $k>1$ by means of Theorem~\ref{lower-bound} may lead to better estimate of $\tilde{\rho}(\mathcal{M})$ compared to a direct application of Theorem~\ref{lower-bound} to the set $\mathcal{M}$.
\end{itemize}
We show below through a simple example that Theorem \ref{lower-bound} may strictly increase the lower bound on $\tilde{\rho}(\mathcal{M})$. Whether the iteration of such a method leads asymptotically to the actual value $\tilde{\rho}(\mathcal{M})$ remains an open problem.  
\label{rem-improve-2}
\end{remark}

 \begin{example}
 \label{Stanford-revisited}
 To illustrate the previous result we consider a set of matrices $\mathcal{M}=\{A_1,A_2\}$ where $A_1=\mathrm{diag}(c,c^{-1})$ with $c\in(0,1)$, and $A_2$ is a two-by-two orthogonal matrix. In particular the matrices in {Example~\ref{ex-urbano}} 
 satisfy such assumptions with $c=1/2$. In the notation of Theorem~\ref{lower-bound} we have $\delta_1=c,\ \delta_2=1$ and $\Delta_1=\Delta_2=1$. Moreover, $\bar\nu_1=\bar\nu_2=1/2$ and $\Psi(\bar\nu) = \log \frac12 -\log c$. Thus, for $c\in (0,1/2]$ we fall into case $(a)$ of the theorem; the lower bound $\tilde{\rho}_-=1/\sqrt{2}$ provided by both Theorem~\ref{lower-bound-0} and Theorem~\ref{lower-bound}  improves the value $\delta_{\rm min}\triangleq\min\{\delta_1,\delta_2\}=c$ of \cite[Theorem~4.7]{SICON-feedback}. On the other hand, if $c\in (1/2,1)$ we fall into case $(b)$ of Theorem~\ref{lower-bound}, and the lower bound $\tilde{\rho}^*_-$ is strictly larger than both $\tilde{\rho}_-$ and $\delta_{\rm min}$.
 In this case it is easy to see that the minimum is attained at the interior of $\Sigma_2$ and that it is associated with the unique solution of  the equation $\Psi(\hat{\nu}_1(\beta),\dots,\hat{\nu}_m(\beta))=0$, obtained for $S = \{1,2\}$, which may be easily found numerically. In Table~\ref{table-example} we collect the lower bounds $\delta_{\rm min},\tilde{\rho}_-,\tilde{\rho}^*_-$ for different values of $c\in (0,1)$.
  \end{example}

 \begin{table}
 \begin{center}
\begin{tabular}{|c|c|c|c|c|c|c|c|c|}
\hline
 c & 0.2 & 0.3 & 0.4 & 0.5 & 0.6 & 0.7 & 0.8 & 0.9\\
\hline
 $\delta_{\rm min}$ & 0.2 & 0.3 & 0.4 & 0.5 & 0.6 & 0.7 & 0.8 & 0.9\\
$\tilde{\rho}_-$ & {\it 0.7071} & {\it 0.7071}  & {\it 0.7071} & {\it 0.7071} & 0.7071 & 0.7071 & 0.7071 & 0.7071\\
$\tilde{\rho}^*_-$ & - & - & - & {\it0.7071} & {\it 0.7212} & {\it 0.7613} & {\it 0.8236} & {\it 0.9048}\\
\hline
\end{tabular}
   \caption{\label{table-example}Computation of   $\delta_{\rm min},\tilde{\rho}_-$ and $\tilde{\rho}_-^*$ in terms of the parameter $c$ appearing in  the set $\mathcal{M}$ of Example~\ref{Stanford-revisited}. 
   Italic numbers represent the best bound.}
\end{center}
\end{table}

\section{Dependence of the 
{stabilizability} radius on the initial condition} \label{sec-dependence}

We consider now an application of Theorem~\ref{lower-bound}. We are interested in studying the dependence of $\tilde{\rho}_{x_0}(\mathcal{M})$ on the initial condition $x_0$. In general, one cannot expect this function  to be everywhere continuous. For instance, if $\mathcal{M}$ is made of a single matrix $A=\mathrm{diag}\{\lambda_1,\dots,\lambda_n\}$, then the image of 
{such a function} is equal to $\{|\lambda_1|,\dots,|\lambda_{n}|\}$ and $\tilde{\rho}_{x_0}(\mathcal{M})=|\lambda_i|$ if $(x_0)_i\neq 0$ and $(x_0)_j = 0$ for all $j$ such that  $|\lambda_i|\leq |\lambda_j|$. In particular $\tilde{\rho}_{x_0}(\mathcal{M})$ is discontinuous at any point $x_0$ with zero component along the eigenspaces corresponding to the eigenvalues of  maximum absolute value.  We show below a much more surprising result, which entails the existence of linear switched systems such that $\tilde{\rho}_{x_0}(\mathcal{M})$ is nowhere continuous.
Namely, we provide general conditions on the linear switched system ensuring the existence of an open set $\mathcal{A}\subset\mathbb{R}^n$ and two disjoint subsets $U,S$ both dense in $\mathcal{A}$, such that $\tilde{\rho}_{x_0}(\mathcal{M})\geq c_1$ for $x_0\in U$ and $\tilde{\rho}_{x_0}(\mathcal{M})\leq c_2$ for $x_0\in S$ for some $c_1>c_2>0$.\\
In other words, up to multiplying the matrices of $\mathcal{M}$ by a common constant, from each point of $S$ it is possible to stabilize exponentially the system and from each point of $U$ it is impossible to stabilize the system.

We start with the following definitions which adapt classical notions from continuous-time controlled dynamical systems. 
\begin{definition}
Consider the discrete-time switched system on the projective space $\mathbb{P}^{n-1}$ (for simplicity, we identify here a point of $\mathbb{P}^{n-1}$ with a pair of opposite points $z,-z$ in $S^{n-1}$) 
\begin{equation}
z(k+1)=\frac{B_{\sigma_k}z(k)}{|B_{\sigma_k} z(k)|},\quad B_{\sigma_k}\in \mathcal{N}\subset \mathbb{R}^{n\times n},
\label{eq-proj}
\end{equation}
and denote the attainable set in positive time $O^+(z(0))$ from $z(0)$ as the set of all points that can be reached from $z(0)$  for all $k\geq 0$ and switching signals $\sigma$, that is $O^+(z(0))=\big\{\frac{Bz(0)}{|Bz(0)|}\,\vert\, B\in \mathcal{N}^k,\ k\geq 0\big\}$. If $\mathcal{N}$ is made by nonsingular matrices, we also define the attainable set in negative time from $z(0)$ as $O^-(z(0))=\big\{\frac{B^{-1}z(0)}{|B^{-1}z(0)|}\,\vert\, B\in \mathcal{N}^k,\ k\geq 0\big\}$.
\end{definition}

We have the following general result concerning the 
{stabilizability} rate $\tilde{\rho}_{x_0}(\mathcal{M})$.
\begin{proposition}
\label{discontinuous}
 Consider System \eqref{ss} and assume that  $\mathcal{M}=\{A_1,\dots,A_m\}\subset \mathbb{R}^{n\times n}$ only contains nonsingular matrices. 
Consider the projected switched system on $\mathbb{P}^{n-1}$
\begin{equation}
\label{eq-proj-1}
z(k+1)=\frac{A_{\sigma_k}z(k)}{|A_{\sigma_k} z(k)|},\quad A_{\sigma_k}\in \mathcal{M}.
\end{equation}
Assume that there exist two points $z^{(1)},z^{(2)}\in\mathbb{P}^{n-1}\subset \mathbb{R}^n$ such that $\tilde{\rho}_{z^{(1)}}(\mathcal{M})<\tilde{\rho}_{z^{(2)}}(\mathcal{M})$. Then the function $x\mapsto \tilde{\rho}_{x}(\mathcal{M})$ is discontinuous at every point of the cone $$D=\{x\in \mathbb{R}^n: x= \lambda \ (\mathrm{clos} (O^-(z^{(1)}))\cap \mathrm{clos} (O^+(z^{(2)}))),\ \lambda >0\}.$$
\end{proposition}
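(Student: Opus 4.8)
The plan is to reduce the statement to a single monotonicity property of the map $z\mapsto\tilde\rho_z(\mathcal{M})$ along the projected orbits, and then to close by a soft topological argument; throughout, closures are taken in the compact space $\mathbb{P}^{n-1}$ (equivalently $S^{n-1}$), so that closures coincide with sequential closures.

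First I would record two elementary facts. \emph{Scaling invariance:} the map $x\mapsto\tilde\rho_x(\mathcal{M})$ is unchanged under $x\mapsto cx$ for $c\neq 0$, since multiplying $x_0$ by $c$ multiplies every solution of~\eqref{ss} through $x_0$ by $c$ while leaving the inequality $|x(k)|\le M\lambda^k|x_0|$ invariant; hence $\tilde\rho$ descends to a well-defined function on $\mathbb{P}^{n-1}$. \emph{Monotonicity under reachability:} if $z'\in O^+(z)$ for the projected system~\eqref{eq-proj-1}, i.e.\ $z'=Bz/|Bz|$ for some $B\in\mathcal{M}^k$, then $\tilde\rho_z(\mathcal{M})\le\tilde\rho_{z'}(\mathcal{M})$. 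To prove this, fix $\mu>\tilde\rho_{z'}(\mathcal{M})$ and a solution $y(\cdot)$ of~\eqref{ss} with $y(0)=z'$ and $|y(j)|\le N\mu^j$ for all $j\ge 0$; writing $B=A_{\sigma_{k-1}}\cdots A_{\sigma_0}$, concatenate the symbols $\sigma_0,\dots,\sigma_{k-1}$ with the switching signal of $y(\cdot)$ to obtain a solution $x(\cdot)$ of~\eqref{ss} with $x(0)=z$ and $x(k)=Bz=|Bz|\,z'$, so that $x(k+j)=|Bz|\,y(j)$. Since the first $k$ steps only introduce a bounded factor, a routine bookkeeping of constants gives $|x(i)|\le M'\mu^i|z|$ for all $i\ge 0$ with $M'$ independent of $i$, whence $\tilde\rho_z(\mathcal{M})\le\mu$; letting $\mu\downarrow\tilde\rho_{z'}(\mathcal{M})$ proves the claim. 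I expect this constant-chasing to be the only genuinely technical point; everything afterwards is soft.

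Next I would extract from the hypothesis the two families of directions that witness the discontinuity. Set $a=\tilde\rho_{z^{(1)}}(\mathcal{M})$ and $b=\tilde\rho_{z^{(2)}}(\mathcal{M})$, so $a<b$. If $z'\in O^-(z^{(1)})$ then $z^{(1)}\in O^+(z')$, so monotonicity gives $\tilde\rho_{z'}(\mathcal{M})\le\tilde\rho_{z^{(1)}}(\mathcal{M})=a$; hence every point of $\mathrm{clos}(O^-(z^{(1)}))$ is a limit of directions at which $\tilde\rho(\mathcal{M})\le a$. Applying monotonicity with base point $z^{(2)}$, every $z'\in O^+(z^{(2)})$ satisfies $\tilde\rho_{z'}(\mathcal{M})\ge\tilde\rho_{z^{(2)}}(\mathcal{M})=b$, so every point of $\mathrm{clos}(O^+(z^{(2)}))$ is a limit of directions at which $\tilde\rho(\mathcal{M})\ge b$.

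Finally I would conclude. Let $w\in W:=\mathrm{clos}(O^-(z^{(1)}))\cap\mathrm{clos}(O^+(z^{(2)}))$ (the assertion being vacuous if $W=\emptyset$), let $\lambda>0$, and put $x_\star=\lambda w$, a typical point of $D$. By the preceding step there are sequences $p_j\to w$ with $\tilde\rho_{p_j}(\mathcal{M})\le a$ and $q_j\to w$ with $\tilde\rho_{q_j}(\mathcal{M})\ge b$; by scaling invariance, $\lambda p_j\to x_\star$ with $\tilde\rho_{\lambda p_j}(\mathcal{M})\le a$ and $\lambda q_j\to x_\star$ with $\tilde\rho_{\lambda q_j}(\mathcal{M})\ge b$. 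Were $x\mapsto\tilde\rho_x(\mathcal{M})$ continuous at $x_\star$, we would obtain simultaneously $\tilde\rho_{x_\star}(\mathcal{M})\le a$ and $\tilde\rho_{x_\star}(\mathcal{M})\ge b$, contradicting $a<b$. Hence $\tilde\rho$ is discontinuous at every point of $D$.
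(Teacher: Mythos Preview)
Your proof is correct and follows essentially the same route as the paper's: the paper simply asserts as ``clear'' that $\tilde\rho_z\le\tilde\rho_{z^{(1)}}$ for $z\in O^-(z^{(1)})$ and $\tilde\rho_z\ge\tilde\rho_{z^{(2)}}$ for $z\in O^+(z^{(2)})$, and then concludes by density of these orbits in $D$. You have spelled out in detail the scaling invariance and the monotonicity-under-reachability lemma that the paper leaves implicit, but the underlying argument is identical.
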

\begin{proof}
Clearly $\tilde{\rho}_{z}(\mathcal{M})\leq \tilde{\rho}_{z^{(1)}}(\mathcal{M})$ if $z\in O^-(z^{(1)})$ and $\tilde{\rho}_{z}(\mathcal{M})\geq \tilde{\rho}_{z^{(2)}}(\mathcal{M})$ if $z\in O^+(z^{(2)})$. The conclusion follows since  both $O^-(z^{(1)})$ and $O^+(z^{(2)})$ are dense in $D$.
\end{proof}
Of course the previous result is of some interest only if the set $D$ is nonempty. This is the case under some (approximate) controllability property of the system. For instance, if $n=2$ and if there exists $k\in\mathbb{N}$ and $A\in\mathcal{M}^k$ with nonreal eigenvalues such that $A^h$ is not proportional to the identity for every integer $h\neq 0$ then $\mathrm{clos} (O^+(z)) = \mathrm{clos} (O^-(z)) = \mathbb{P}^{1}$ for any $z\in \mathbb{P}^{1}$. Indeed, in this case $A$ is similar to a multiple of a rotation matrix whose corresponding attainable sets are dense in  $\mathbb{P}^{1}$.
Furthermore, Theorem~\ref{lower-bound} allows to determine some sets of matrices for which the existence of $z^{(1)},z^{(2)}$ such that $\tilde{\rho}_{z^{(1)}}(\mathcal{M})<\tilde{\rho}_{z^{(2)}}(\mathcal{M})$, as required in Proposition~\ref{discontinuous}, is satisfied. 
An example of application, which immediately follows from the discussion above and from Remark~\ref{rem-improve}, is given as follows.
\begin{proposition}
\label{particular-case}
Let $n=2$ and assume that there exists $A\in\cup_{k\in \mathbb{N}}\mathcal{M}^k$ with nonreal eigenvalues such that $A^l$ is not proportional to the identity for every nonzero integer $l$ and that there is a single index $\bar h$ satisfying $\min_{h=1,\dots,m}\delta_h=\delta_{\bar h}$, with $A_{\bar h}$ diagonal and not proportional to the identity.
Then $\tilde{\rho}_x(\mathcal{M})$ is discontinuous at each point of $\mathbb{R}^2$.
\end{proposition}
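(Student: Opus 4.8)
The plan is to derive the statement from Proposition~\ref{discontinuous}. I would (i) exhibit two points $z^{(1)},z^{(2)}\in\mathbb{P}^1$ with $\tilde{\rho}_{z^{(1)}}(\mathcal{M})<\tilde{\rho}_{z^{(2)}}(\mathcal{M})$, (ii) verify that the cone $D$ that Proposition~\ref{discontinuous} attaches to such a pair is all of $\mathbb{R}^2\setminus\{0\}$, and (iii) dispose of the origin by a one-line observation.

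\textbf{Step (i).} For $z^{(1)}$ I would take a coordinate axis of $A_{\bar h}$: writing $A_{\bar h}=\mathrm{diag}(\lambda_1,\lambda_2)$, diagonality gives $\delta_{\bar h}=s_1(A_{\bar h})=\min(|\lambda_1|,|\lambda_2|)$, and this value is attained along one of the two axes. Choosing $z^{(1)}$ to be that axis and the constant switching signal $\sigma_k\equiv\bar h$, the trajectory stays on the axis with $|x(k)|=\delta_{\bar h}^{\,k}|x(0)|$, so $\tilde{\rho}_{z^{(1)}}(\mathcal{M})\le\delta_{\bar h}=\min_h\delta_h$. For $z^{(2)}$ I would invoke Remark~\ref{rem-improve}, which is a consequence of Theorem~\ref{lower-bound}: the hypotheses that $\min_h\delta_h$ is attained only at $\bar h$ and that $A_{\bar h}$ is not proportional to an orthogonal matrix give the strict inequality $\tilde{\rho}(\mathcal{M})>\min_h\delta_h=\delta_{\bar h}$. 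Since $\tilde{\rho}(\mathcal{M})=\sup_{x_0}\tilde{\rho}_{x_0}(\mathcal{M})$, some $x_0$ satisfies $\tilde{\rho}_{x_0}(\mathcal{M})>\delta_{\bar h}$; and since $\tilde{\rho}_{x_0}(\mathcal{M})$ depends only on the line $\mathbb{R}x_0$ (by homogeneity of~\eqref{ss}), $x_0$ determines a point $z^{(2)}\in\mathbb{P}^1$ with $\tilde{\rho}_{z^{(2)}}(\mathcal{M})>\delta_{\bar h}\ge\tilde{\rho}_{z^{(1)}}(\mathcal{M})$.

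\textbf{Steps (ii)--(iii).} The element $A\in\bigcup_k\mathcal{M}^k$ has nonreal eigenvalues, hence is similar over $\mathbb{R}$ to a nonzero multiple of a rotation $R_\theta$; the requirement that $A^l$ be not proportional to the identity for every nonzero integer $l$ forces $\theta/\pi\notin\mathbb{Q}$, so $A$ acts on $\mathbb{P}^1$ as an irrational rotation and its forward and backward orbits are dense. As observed in the discussion following Proposition~\ref{discontinuous}, this yields $\mathrm{clos}(O^+(z))=\mathrm{clos}(O^-(z))=\mathbb{P}^1$ for every $z\in\mathbb{P}^1$, so in particular $\mathrm{clos}(O^-(z^{(1)}))\cap\mathrm{clos}(O^+(z^{(2)}))=\mathbb{P}^1$ and the cone of Proposition~\ref{discontinuous} is $\mathbb{R}^2\setminus\{0\}$. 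That proposition then makes $x\mapsto\tilde{\rho}_x(\mathcal{M})$ discontinuous at every $x\neq0$. For $x=0$ one notes that $\tilde{\rho}_0(\mathcal{M})=0$ while every neighbourhood of $0$ contains positive multiples of $z^{(2)}$, at which $\tilde{\rho}_x(\mathcal{M})=\tilde{\rho}_{z^{(2)}}(\mathcal{M})>0$; hence discontinuity holds at $0$ as well, and the proof is complete.

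The step that needs care is the construction of $z^{(2)}$: Theorem~\ref{lower-bound} and Remark~\ref{rem-improve} deliver a strict bound only for the \emph{global} radius $\tilde{\rho}(\mathcal{M})$, and one must use both that it is a supremum over initial conditions and that $\tilde{\rho}_{x_0}$ is invariant under rescaling of $x_0$ in order to extract a single witnessing direction. (Strictly speaking one applies Remark~\ref{rem-improve} with ``$A_{\bar h}$ not proportional to an orthogonal matrix'', i.e. $\Delta_{\bar h}>\delta_{\bar h}^2$; for a non-scalar diagonal $2\times 2$ matrix this just means its two diagonal entries have distinct moduli.) Everything else is routine: the estimate $\tilde{\rho}_{z^{(1)}}(\mathcal{M})\le\delta_{\bar h}$ is immediate from diagonality, and the density of the attainable sets on $\mathbb{P}^1$ was already established in the text preceding the statement.
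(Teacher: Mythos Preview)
Your proof is correct and follows exactly the route the paper intends: the proposition is presented there as an immediate consequence of Proposition~\ref{discontinuous}, the density discussion for $\mathbb{P}^1$ via the irrational-rotation action of $A$, and Remark~\ref{rem-improve}, and you have simply made explicit the choices of $z^{(1)}$ (a coordinate axis of $A_{\bar h}$), $z^{(2)}$ (a witness to the strict global bound), and the treatment of the origin.

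Your closing parenthetical is on point and in fact flags a small imprecision in the paper's own statement. What Remark~\ref{rem-improve} requires is that $A_{\bar h}$ not be proportional to an \emph{orthogonal} matrix, i.e.\ $\Delta_{\bar h}>\delta_{\bar h}^2$, which for a diagonal $2\times 2$ matrix means diagonal entries of distinct moduli. The hypothesis ``not proportional to the identity'' does not literally imply this: $A_{\bar h}=\mathrm{diag}(c,-c)$ with $0<|c|<1$ is diagonal, non-scalar, and uniquely attains the minimal singular value, yet $|A_{\bar h}x|=|c|\,|x|$ for every $x$, so $\tilde{\rho}_x(\mathcal{M})\equiv|c|$ on $\mathbb{R}^2\setminus\{0\}$ and the conclusion fails. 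This is a wording issue in the proposition rather than a defect in your argument; reading the hypothesis as ``diagonal with entries of distinct moduli'' (which is clearly what is meant, given the appeal to Remark~\ref{rem-improve}) your proof goes through verbatim.
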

{It is easy to see that the matrices in Example~\ref{ex-urbano} satisfy the assumptions of Proposition~\ref{particular-case}.}
Indeed, the matrix $A_2A_1$ is similar to a rotation matrix of angle $\theta$, with $\theta$ incommensurable with $\pi$ (see~\cite{SICON-feedback} for more details).
{An even simpler application of Proposition~\ref{particular-case} is obtained if one directly replaces the rotation angle $\frac{\pi}4$ in the matrix $A_1$ of Example~\ref{ex-urbano} with any angle incommensurable with $\pi$.}
A further numerical example is given below.
 \begin{example} \label{ex-2densesets}
We consider the set of matrices $\mathcal{M}=\{A_1,A_2,A_3\}$ where
 \[A_1=\begin{pmatrix}-2 &3\\ -6 & 4 \end{pmatrix}\,,\quad A_2=\begin{pmatrix}-0.8 &0\\ 0 & 2 \end{pmatrix}\,,\quad A_3=\begin{pmatrix}2 &-1\\ -2 & -2 \end{pmatrix}.\]
 The assumptions of Proposition~\ref{particular-case} are satisfied since $A_1$ has nonreal eigenvalues which are not proportional to roots of the unit while the minimum singular value is equal to $0.8$ and is associated with the diagonal matrix $A_2$. An application of Theorem~\ref{lower-bound} gives the lower bounds $\tilde{\rho}_- = 1.059$ and $\tilde{\rho}^*_- = 1.0675$  for  $\tilde{\rho}(\mathcal{M})$. As a consequence, there exists a dense subset of $\mathbb{R}^2$ starting from which it is possible to stabilize exponentially the system (with the exponential rate $\tilde{\rho}_x(\mathcal{M})=0.8$) and another dense subset starting from which it is not possible to stabilize the system. 
 \end{example}

\section{Conclusion}
In this paper, we have studied the 
{stabilizability} radius of linear switched systems.  Even though such systems are well known to be extremely complex to analyse, we believe that the 
{stabilizability} radius exhibits particularly complex phenomena (see for instance Example \ref{ex-2densesets}), and on the other hand it has been the topic of very little study in the literature. As an example of this, no method was available in order to provide a nontrivial lower bound on the  
{stabilizability} radius of the matrices in Example \ref{ex-urbano}, even though they were introduced more than 25 years ago.  

{Our lower bounds provide a useful complement to previously available methods, which offer upper bounds (that is, sufficient conditions for stabilizability).  Indeed, when the lower bound is larger than one, one can directly deduce infeasability of the sufficient conditions.}

We have provided two results allowing to improve these lower bounds. In particular, Theorem \ref{lower-bound} provides a lower bound that can be refined by simply iteratively computing longer products of the matrices in the studied set.  We leave open the question of whether this procedure leads to the true value of the 
{stabilizability}  radius (as is the case for other classical algorithms allowing to compute other joint spectral characteristics).  In Section \ref{sec-dependence}, we provide a more theoretical analysis, showing that complex discontinuity phenomena occur, even for quite simple examples.

From a control-theoretic perspective, we believe that the problem studied here is of high importance in the context of formal methods and cyber-physical systems control, where the set of control actions available to the controller is often made of a discrete set. We hope that the present research sheds some light on the complexity of the phenomena at stake, and that it will motivate further research in that direction.

\section*{Acknowledgement}
{ This work was supported by the Engineering and Physical Sciences Research Council, grant EP/N002458/1, by the FNRS, the Innoviris Foundation and the Walloon Region, and by  the iCODE institute, research project of the Idex Paris-Saclay. No new data were created in this study.}

The authors would like to thank J. Ouaknine for organising the Bellairs workshop Algorithmic Aspects of Dynamical Systems, (Barbados, March 2017) where this work was initiated.


\bibliography{biblio}

\begin{thebibliography}{10}
\expandafter\ifx\csname url\endcsname\relax
  \def\url#1{\texttt{#1}}\fi
\expandafter\ifx\csname urlprefix\endcsname\relax\def\urlprefix{URL }\fi
\expandafter\ifx\csname href\endcsname\relax
  \def\href#1#2{#2} \def\path#1{#1}\fi

\bibitem{BlTi3}
J.~N. Tsitsiklis, V.~Blondel, The {L}yapunov exponent and joint spectral radius
  of pairs of matrices are hard- when not impossible- to compute and to
  approximate, Mathematics of Control, Signals, and Systems 10 (1997) 31--40.

\bibitem{BlTi2}
V.~D. Blondel, J.~N. Tsitsiklis, The boundedness of all products of a pair of
  matrices is undecidable, Systems and Control Letters 41 (2000) 135--140.

\bibitem{rs60}
G.~C. Rota, W.~G. Strang, A note on the joint spectral radius, Indagationes
  Mathematicae 22 (1960) 379--381.

\bibitem{jungers_lncis}
R.~M. Jungers, The joint spectral radius, theory and applications, in: Lecture
  Notes in Control and Information Sciences, Vol. 385, Springer-Verlag, Berlin,
  2009.

\bibitem{Jia}
R.~Q. Jia, Subdivision schemes in $l_p$ spaces, Advances in Computational
  Mathematics 3 (1995) 309--341.

\bibitem{Wang}
Y.~Wang, Two-scale dilation equations and the mean spectral radius, Random and
  Computational Dynamics 4~(1) (1996) 49--72.

\bibitem{Protasov-jungers-lyap-laa}
V.~Protasov, R.~Jungers,
  \href{http://www.sciencedirect.com/science/article/pii/S002437951300089X}{Lower
  and upper bounds for the largest lyapunov exponent of matrices}, Linear
  Algebra and its Applications 438~(11) (2013) 4448 -- 4468.
\newblock \href {http://dx.doi.org/10.1016/j.laa.2013.01.027}
  {\path{doi:10.1016/j.laa.2013.01.027}}.
\newline\urlprefix\url{http://www.sciencedirect.com/science/article/pii/S002437951300089X}

\bibitem{pollicott}
M.~Pollicott, Maximal lyapunov exponent for random matrix productss,
  Inventiones Mathematicae 181 (2010) 209--226.

\bibitem{gu1}
L.~Gurvits, Stability of discrete linear inclusions, Linear Algebra and its
  Applications 231 (1995) 47--85.

\bibitem{GP11}
N.~Guglielmi, V.~Protasov,
  \href{http://dx.doi.org/10.1007/s10208-012-9121-0}{Exact computation of joint
  spectral characteristics of linear operators}, Foundations of Computational
  Mathematics (2012) 1--61\href {http://dx.doi.org/10.1007/s10208-012-9121-0}
  {\path{doi:10.1007/s10208-012-9121-0}}.
\newline\urlprefix\url{http://dx.doi.org/10.1007/s10208-012-9121-0}

\bibitem{SICON-feedback}
R.~M. Jungers, P.~Mason, On feedback stabilization of linear switched systems
  via switching signal control, SIAM Journal on Control and Optimization 55~(2)
  (2017) 1179--1198.

\bibitem{geromel2006stability}
J.~C. Geromel, P.~Colaneri, Stability and stabilization of continuous-time
  switched linear systems, SIAM Journal on Control and Optimization 45~(5)
  (2006) 1915--1930.

\bibitem{geromel2006stabilityd}
J.~C. Geromel, P.~Colaneri, Stability and stabilization of discrete time
  switched systems, International Journal of Control 79~(07) (2006) 719--728.

\bibitem{stanford}
D.~P. Stanford, J.~M. Urbano,
  \href{http://dx.doi.org/10.1137/S0895479892228213}{Some convergence
  properties of matrix sets}, SIAM Journal on Matrix Analysis and Applications
  15~(4) (1994) 1132--1140.
\newblock \href {http://dx.doi.org/10.1137/S0895479892228213}
  {\path{doi:10.1137/S0895479892228213}}.
\newline\urlprefix\url{http://dx.doi.org/10.1137/S0895479892228213}

\bibitem{sontag-book}
E.~D. Sontag, Mathematical control theory: deterministic finite dimensional
  systems, Vol.~6, Springer Science \& Business Media, 1998.

\bibitem{yan2017variable}
X.-G. Yan, S.~K. Spurgeon, C.~Edwards, Variable structure control of complex
  systems, Springer, 2017.

\bibitem{fiacchini-jungers}
M.~Fiacchini, M.~Jungers, Necessary and sufficient condition for
  stabilizability of discrete-time linear switched systems. {A} set theory
  approach, Automatica 50~(1) (2014) 75--83.

\bibitem{fiacchini-girard-jungers}
M.~Fiacchini, A.~Girard, M.~Jungers, On the stabilizability of discrete-time
  switched linear systems: Novel conditions and comparisons, IEEE Transactions
  on Automatic Control 61~(5) (2016) 1181--1193.
\newblock \href {http://dx.doi.org/10.1109/TAC.2015.2450871}
  {\path{doi:10.1109/TAC.2015.2450871}}.

\end{thebibliography}

\end{document}